\def\jobis#1{FF\fi
  \def\predicate{#1}%
  \edef\predicate{\expandafter\strip@prefix\meaning\predicate}%
  \edef\job{\jobname}%
  \ifx\job\predicate
}
\if\jobis{proposal}%
\DeclareMathOperator{\Supp}{Supp}
\DeclareMathOperator{\Pic}{Pic}
 \newcommand{\C}{\mathbb C}
 \newcommand{\Q}{\mathbb Q}
 \newcommand{\R}{\mathbb R}
 \newcommand{\bir}{\dashrightarrow}
 \newcommand{\rddown}[1]{\left\lfloor{#1}\right\rfloor} 
 \numberwithin{equation}{subsection}
 \numberwithin{footnote}{subsection}
 \newtheorem{thm}[subsection]{Theorem}
 \newtheorem{conj}[subsection]{Conjecture}
    \newtheoremstyle{upright}%
        {8pt plus2pt minus4pt}%
        {8pt plus2pt minus4pt}%
        {\upshape}%
        {}%
        {\bfseries\scshape}%
        {}%
        {1em}%
        {}%
\theoremstyle{upright}
 \newtheorem{rem}[subsection]{Remark}
\title[\large V\MakeLowercase{arieties fibred over abelian varieties}]
{\large V\MakeLowercase{arieties fibred over abelian varieties with fibres of log general type}}
\thanks{2010 MSC: 14E30}
\author{\large C\MakeLowercase{aucher} B\MakeLowercase{irkar and} 
J\MakeLowercase{ungkai} A\MakeLowercase{lfred} C\MakeLowercase{hen}}
\date{\today}
\begin{document}
\maketitle

\begin{abstract}
Let $(X,B)$ be a complex projective klt pair, and let $f\colon X\to Z$ be a surjective 
morphism onto a normal projective variety with maximal albanese dimension such that 
$K_X+B$ is relatively big over $Z$. We show that such pairs have good log minimal models.
\end{abstract}


\section{Introduction}

We work over $\C$. The following is the main theorem of this paper.

\begin{thm}\label{t-mmodel}
Suppose that $(X,B)$ is a projective klt pair with $B$ a $\Q$-boundary and that we are given a surjective 
morphism $f\colon X\to Z$ 
where $Z$ is a normal projective variety with maximal albanese dimension (eg an abelian variety).
If $K_X+B$ is big$/Z$, then $(X,B)$ has a good log minimal model. Moreover, 
if $F$ is a general fibre of $f$, then 
$$
\kappa(K_X+B)\ge \kappa(K_F+B_F)+\kappa(Z)=\dim F+\kappa(Z)
$$
where $K_F+B_F=(K_X+B)|_F$ and $\kappa(Z)$ means the Kodaira dimension of a smooth model 
of $Z$.
\end{thm}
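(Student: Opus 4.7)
My plan is to reduce the problem to a relatively ample setting using BCHM, and then exploit the maximal albanese dimension of $Z$ through generic-vanishing-type positivity to obtain both halves of the statement. First, since $(X,B)$ is klt with $K_X+B$ big over $Z$, by BCHM the relative $(K_X+B)$-MMP over $Z$ terminates with a good log minimal model relative to $Z$. Because of relative bigness, the relative log canonical morphism is birational, so after passing to a log resolution I may assume $K_X+B$ is ample over $Z$.

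\medskip

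For the Kodaira dimension inequality I would invoke Iitaka subadditivity $C_{n,m}$ in the case where the base has maximal albanese dimension. Let $Z^\sharp \to Z$ be a smooth model with albanese morphism $a\colon Z^\sharp \to \mathrm{Alb}(Z^\sharp)$; by hypothesis $a$ is generically finite onto its image. Composing with a resolution of $f$ produces a morphism from a smooth model of $X$ to an abelian variety, and subadditivity results of Hacon--Pardini, Chen--Hacon, and Fujino in this albanese setting yield
$$
\kappa(K_X+B) \;\ge\; \kappa(K_F+B_F) + \kappa(Z) \;=\; \dim F + \kappa(Z),
$$
where the last equality uses that $K_X+B$ is big over $Z$.

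\medskip

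To promote the \emph{relative} good minimal model to an \emph{absolute} one, I would establish abundance of $K_X+B$ by matching Iitaka and numerical dimensions. The relative ampleness already controls the restriction to fibres; the albanese geometry of $Z$ provides generic-vanishing-type positivity of the direct images $f_*\omega_{X/Z}^{\otimes m}(\lceil mB\rceil)$ on $Z$, which lifts enough fibrewise sections globally to force $\nu(K_X+B) = \kappa(K_X+B)$. Combined with termination of an appropriate scaled MMP and Kawamata's semiampleness theorem for nef klt divisors with $\kappa = \nu$, this delivers the desired good minimal model of $(X,B)$.

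\medskip

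The principal obstacle is this last step, bridging from relative ampleness to absolute semiampleness. The relative MMP is routine by BCHM, and the subadditivity inequality is essentially within reach of existing results over albanese bases; but precisely matching the numerical and Iitaka dimensions of $K_X+B$ absolutely requires careful use of generic vanishing and positivity of direct images on a variety mapping generically finitely to an abelian variety. It is here that the maximal albanese dimension of $Z$ is indispensable, and here that I expect the bulk of the technical work to lie.
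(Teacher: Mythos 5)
Your overall strategy (relative MMP by BCHM, then leverage the albanese geometry of $Z$) points in the right direction, but both of the places where you defer to existing results are exactly where the real content of the theorem lies, and neither deferral works. First, the subadditivity results of Chen--Hacon and Hacon--Pardini over maximal albanese dimension bases are proved for $X$ smooth with $B=0$; the log version you invoke is precisely (a case of) the generalized Iitaka conjecture for pairs, which is open and which this theorem partially establishes, so you cannot cite it. The paper instead proves $\kappa(K_X+B)\ge \dim F+\kappa(Z)$ from scratch: a nonvanishing theorem ($K_X+B\equiv D\ge 0$) obtained via Fourier--Mukai transforms and Hacon's generic vanishing applied to the direct images $f'_*\mathcal{O}(sID)$ on the abelian variety $A$ onto which $Z$ maps generically finitely, upgraded to $K_X+B\sim_\Q D$ by Campana--Koziarz--P\v{a}un/Kawamata; then the creation of a plt centre $S\subset\Supp D$ and the Demailly--Hacon--P\v{a}un extension theorem to lift sections from $S$ and conclude $\kappa(K_X+B)\ge 1$; then induction on dimension through the Iitaka fibration of $K_X+B$, combined with Kawamata/Ueno's structure theorem for subvarieties of abelian varieties to extract the $\kappa(Z)$ term.

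Second, your passage from a relative good minimal model over $Z$ to an absolute one is asserted rather than argued. The claim that generic-vanishing-type positivity of direct images forces $\nu(K_X+B)=\kappa(K_X+B)$ is the entire difficulty, not a lemma you can quote; and before you can even speak of applying Kawamata's $\nu=\kappa$ semiampleness criterion you need a model on which $K_X+B$ is nef \emph{globally}, not just over $Z$. The paper gets global nefness cheaply from the cone theorem: a negative extremal ray on the minimal model over $A$ would be generated by a rational curve not contracted over $A$, which is impossible since abelian varieties contain no rational curves. Without that observation (or a substitute), and without a concrete mechanism replacing the nonvanishing-plus-extension argument, your outline does not close. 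A smaller but real error: after contracting to the relative log canonical model you cannot ``pass to a log resolution and assume $K_X+B$ is ample over $Z$'' --- resolving destroys ampleness of the log divisor.
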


The proof is given in Section 5. Here we briefly outline the main steps of the 
proof. Since $Z$ has maximal albanese dimension, it admits a generically finite morphism 
$Z\to A$ to an abelian variety. We can run an appropriate  LMMP on $K_X+B$ (and on related log divisors) 
over $A$ by [\ref{BCHM}]. Since $K_X+B$ is big$/Z$, such LMMP terminate. The log minimal 
model we obtain is actually globally a log minimal model (that is, not only over $A$) because $A$ is an abelian variety 
(see Section 3). It requires a lot more work to show that the log minimal model is \emph{good}, 
that is, to show that abundance holds for it. Such semi-ampleness statements are 
usually proved by establishing a nonvanishing result, creating log canonical centres, and 
lifting sections from a centre. For the nonvanishing part, 
we use some of the ideas of Campana-Chen-Peternell [\ref{CCP}, Theorem 3.1] to show that 
$K_X+B\equiv D$ for some $\Q$-divisor $D\ge 0$. This uses the Fourier-Mukai transform 
of the derived category of $A$ combined with the above LMMP and vanishing theorems  
(see Section 4). One can replace $K_X+B\equiv D$ with $K_X+B\sim_\Q D$ [\ref{CKP}][\ref{Kawamata-v=0}]. 
Next, we create a log canonical centre and lift sections from it using 
the extension theorem of Demailly-Hacon-P\v{a}un [\ref{DHP}, Theorem 1.8]. So, 
our proof is somewhat similar to the proof of the base point free theorem but 
the tools we use are quite different. 

Theorem \ref{t-mmodel} is interesting for several reasons. First, it is a non-trivial special case of 
the minimal model and abundance conjectures. Second, its proof provides an   
application of the extension theorem of [\ref{DHP}] which is hard to apply in other contexts because of 
relatively strong restrictions on log canonical centres. Third,  
it is closely related to the following 
generalized Iitaka conjecture. 

\begin{conj}\label{conj-iitaka-pairs}
Let  $(X,B)$ be a projective klt pair with $B$ a $\Q$-boundary, and let
$f\colon X\to Z$ be an algebraic fibre space onto a smooth projective variety $Z$. Then
$$
\kappa(K_X+B)\ge \kappa(K_F+B_F)+\kappa(Z)
$$
where $F$ is a general fibre of $f$ and $K_F+B_F:=(K_X+B)|_F$
\end{conj}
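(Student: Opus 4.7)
The plan is to bootstrap Theorem \ref{t-mmodel} to the generality of
Conjecture \ref{conj-iitaka-pairs} in two reductions: first, remove the relative bigness
hypothesis via the canonical bundle formula applied to a relative Iitaka fibration; second,
remove the maximal albanese dimension hypothesis on $Z$ by passing to the Iitaka fibration
of $Z$ itself.

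After passing to log resolutions, I would first take the relative Iitaka fibration
$\phi\colon X\dashrightarrow Y$ of $K_X+B$ over $Z$, fitting into a morphism $h\colon Y\to Z$
with $\dim Y-\dim Z=\kappa(K_F+B_F)$. On a general fibre of $\phi$ the divisor $K_X+B$ has
Iitaka dimension zero, so the canonical bundle formula produces a generalised klt pair
$(Y,B_Y+M_Y)$, with $M_Y$ nef, satisfying
\[
K_X+B\sim_\Q \phi^*(K_Y+B_Y+M_Y),
\]
and by construction $K_Y+B_Y+M_Y$ is big over $Z$. In particular
$\kappa(K_X+B)=\kappa(K_Y+B_Y+M_Y)$. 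Assuming $Z$ is of maximal albanese dimension, a
generalised-pair version of Theorem \ref{t-mmodel} applied to $h$ would give
\[
\kappa(K_Y+B_Y+M_Y)\ \ge\ \dim(Y/Z)+\kappa(Z)\ =\ \kappa(K_F+B_F)+\kappa(Z),
\]
settling the conjecture in this case. For a general base $Z$, I would take its Iitaka
fibration $g\colon Z\dashrightarrow W$ and argue fibrewise over $W$: a general fibre $Z_w$
has $\kappa(Z_w)=0$, which should, after a birational model, fall within the scope of the
maximal albanese dimension case by known structure results for varieties of Kodaira
dimension zero, whereupon easy addition for the composition $X\to W$ assembles the
individual inequalities into the desired global one.

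The main obstacle is the middle step, namely extending the proof of Theorem \ref{t-mmodel}
to generalised pairs with a nef moduli term $M_Y$. The entire chain of tools outlined in
the introduction -- the LMMP over the base abelian variety, the Fourier--Mukai argument of
Campana--Chen--Peternell, and the Demailly--Hacon--P\v{a}un extension theorem -- is tuned
to a klt pair with boundary alone, and creating log canonical centres and lifting sections
in the presence of an extra nef correction $M_Y$ is delicate; one expects to need b-nefness
or semi-ampleness input for the moduli part to carry the argument through. A secondary
difficulty lies in the reduction from general $Z$ to the maximal albanese dimension case,
which ultimately relies on Ueno-type conjectures that are only partially known; short of
these, one would have to factor through the Albanese morphism of $Z$ directly and control
the contribution coming from its (possibly lower-dimensional) Albanese image.
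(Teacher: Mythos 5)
The statement you are trying to prove is stated in the paper as a \emph{conjecture}, and the paper does not prove it; it only proves the special case where $K_X+B$ is big over $Z$ and $Z$ has maximal albanese dimension (Theorem \ref{t-mmodel}), and then, in its final section, sketches essentially the same programme you describe for the maximal-albanese-dimension case and explicitly identifies where it breaks down. Your proposal is therefore a programme rather than a proof, and the gap you flag as the ``main obstacle'' is in fact fatal at present: after applying Fujino--Mori to the relative Iitaka fibration one obtains a klt pair $(X',B')$ together with a nef moduli divisor $L'$ such that $K_{X'}+B'+L'$ is big over $Z$, and the nonvanishing (Theorem \ref{t-nv}) and the construction of a plt centre $S'$ do go through for such polarized pairs; but the section-lifting step fails, because the extension theorem of Demailly--Hacon--P\v{a}un cannot be applied in the presence of an arbitrary nef $L'$. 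One would need $L'$ to be semi-positive in the analytic sense (or b-semi-ample), and the authors state they cannot establish this. So the ``generalised-pair version of Theorem \ref{t-mmodel}'' that your argument pivots on is not available, and nothing in your proposal supplies it.

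Your second reduction, from arbitrary $Z$ to $Z$ of maximal albanese dimension via the Iitaka fibration of $Z$, has an additional unaddressed gap: a general fibre $Z_w$ of the Iitaka fibration has $\kappa(Z_w)=0$, but varieties of Kodaira dimension zero need not have maximal albanese dimension (their Albanese map can even be constant, as for simply connected Calabi--Yau varieties), so the fibrewise statement does not fall within the scope of the maximal-albanese-dimension case. Moreover the ``easy addition'' step assembling the fibrewise inequalities over $W$ is itself an instance of the subadditivity conjecture for the composition $X\to W$, which is exactly the kind of statement one is trying to prove. If you restrict attention to the case the paper actually handles in Theorem \ref{t-mmodel}, your first reduction is close in spirit to the paper's Section 6 discussion; but as a proof of the conjecture as stated, both halves of your argument rest on open problems.
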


It is plausible that our proof of Theorem \ref{t-mmodel} can be extended to give a proof 
of the conjecture when $Z$ has maximal albanese dimension. In order to do so we would 
need an extension theorem stronger than [\ref{DHP}, Theorem 1.8] (see Section 6 for more details).
When $X$ is smooth, $B=0$, and $Z$ has maximal albanese dimension, the conjecture was 
proved by Chen and Hacon [\ref{Chen-Hacon}] using very different methods.

\subsection*{{Acknowledgements.}} 
The authors would like to thank Hajime Tsuji and Mihai P\v{a}un for many discussions on 
Conjecture \ref{conj-iitaka-pairs}.
The first author was supported by the 
Fondation Sciences Math\'ematiques de Paris when he visited Jussieu Mathematics Institute in Paris, 
and by a Leverhulme grant. 
Part of this work was done while 
the first author was visiting the second author at the National Taiwan University, and he would like 
to thank everyone for their hospitality.

\vspace{0.5cm}
\section{Preliminaries}

\subsection{{Notation and conventions.}} We work over the complex numbers $\C$
and our varieties are quasi-projective unless stated otherwise. A pair $(X,B)$ 
consists of a normal variety $X$ and a $\Q$-divisor $B$ with coefficients in 
$[0,1]$ such that $K_X+B$ is $\Q$-Cartier. For definitions and basic properties 
of singularities of pairs such as log canonical 
(lc), Kawamata log terminal (klt), dlt, plt, and the log minimal model program 
(LMMP) we refer to [\ref{Kollar-Mori}].

\subsection{{Divisors and Iitaka fibrations.}} Let $f\colon X\to Z$ be a 
projective morphism of normal varieties, and $Z'$ the normalization 
of the image of $f$. 
By a general fibre of $f$ we mean a general fibre of the induced morphism
$X\to Z'$. 
We say that an  $\R$-divisor $D$ on $X$ is big$/Z$ if $D\sim_\R H+E/Z$ where $H$ is an
ample$/Z$ $\R$-divisor and $E$ is an effective $\R$-divisor. Note that $D$ is big$/Z$
iff it is big$/Z'$.

Now assume  that $D$ is $\Q$-Cartier. When $X$ is projective, Iitaka [\ref{Iitaka-book}, \S 10]
proves the existence of the so-called \emph{Iitaka fibration} of $D$ assuming $\kappa(D)\ge 0$.
More precisely, there is a resolution $h \colon W\to X$ and a contraction $g\colon W\to T$
such that $\kappa(D)=\dim T$ and $\kappa(h^*D|_G)=0$ for a general fibre $G$ of $g$.
A similar construction exists for the map $f$ even if $X$ is not projective. That is,
 there is a resolution $h \colon W\to X$ and a contraction $g\colon W\to T/Z$
such that $\kappa(D|_F)=\dim V$ and $\kappa(h^*D|_G)=0$
where $F$ is a general fibre of $f$, $G$ is a general fibre of $g$, and
$V$ is a general fibre of $T\to Z$. This is called the \emph{relative Iitaka fibration} and
its existence follows from the absolute projective case: first take an appropriate compactification
of $X,Z$ and then consider the Iitaka fibration of the divisor $D+f^*A$
where $A$ is a sufficiently ample divisor on $Z$, then apply [\ref{Iitaka-book}, \S 10, Exercise 10.1].

\subsection{{Minimal models.}}
Let $f\colon X\to Z$ be a 
projective morphism of normal varieties and $D$ an $\R$-Cartier divisor 
on $X$. A normal variety $Y/Z$ together with a birational map $\phi\colon X\bir Y/Z$ 
whose inverse does not contract any divisors is called a \emph{minimal model of $D$ over $Z$} if: 

$\bullet$ $D_Y=\phi_*D$ is nef$/Z$,

$\bullet$ there is a common resolution $g\colon W\to X$ and $h\colon W\to Y$ such that 
$E:=g^*D-h^*D_Y$ is effective and $\Supp g_*E$ contains all the exceptional divisors of $\phi$.

Moreover, we say that $Y$ is a \emph{good} minimal model if $D_Y$ is semi-ample$/Z$.

If one can run an LMMP on $D$ which terminates with a model $Y$ on which $D_Y$ is nef$/Z$, 
then $Y$ is a minimal model of $D$ over $Z$. When $D$ is a log divisor, that is of the form 
$K_X+B$ for some pair $(X,B)$, we use the term \emph{log minimal model}.

\vspace{0.5cm}

\section{{LMMP over abelian varieties.}}\label{rem-LMMP-abelian} 

In this section, we discuss running the LMMP over an abelian variety which we need in the later 
sections. 
 
\subsection{{Relative LMMP for polarized pairs of log general type.}}\label{ss-relative-LMMP}
Let $f\colon X\to Z$ be a morphism from a
normal projective variety $X$.
Let $D$ be a $\Q$-Cartier divisor on $X$ such that $D\sim_\Q K_X+B+L$ where we assume $D$ is big$/Z$,
$L$ is nef globally, and $(X,B)$ is klt. Sometimes $(X,B+L)$ is referred to as a \emph{polarized pair}. 
Since $D$ is big$/Z$, we can write $D\sim_\Q H+E/Z$ where $H$ is an ample $\Q$-divisor (globally)
and $E\ge 0$. For each sufficiently small rational number $\delta>0$ there is a log divisor  
$K_X+B_{\delta}$ such that $(X,B_{\delta})$ is klt and 
over $Z$ we have
$$
K_X+B_{\delta} \sim_\Q  K_X+B+\delta E+L+\delta H \sim_\Q 
(1+\delta)(K_X+B+L)
 /Z
$$
where we make use of the facts that $L+\delta H$ is ample and $(X,B+\delta E)$ is klt.
Now by [\ref{BCHM}],  we can run an LMMP$/Z$ on $K_X+B_{\delta}$ ending up with 
a log minimal model $Y/Z$. The LMMP is also an LMMP on $K_X+B+L$ so $Y$ is also a minimal model$/Z$ of 
$K_X+B+L$. Moreover, $K_Y+B_Y+L_Y$ is semi-ample/$Z$ by applying the base point free 
theorem to $K_Y+B_{\delta,Y}$.

\subsection{{LMMP over an abelian variety}}\label{ss-LMMP-abelian}
In addition to the assumptions of \ref{ss-relative-LMMP}, suppose that $Z=A$ is an abelian variety.
Then,
$K_Y+B_Y+L_Y$ is nef, not only$/A$, but
it is so globally. If this is not the case, then there would be an extremal ray $R$ on $Y$ such that
 $(K_{Y}+B_{Y}+L_{Y})\cdot R<0$. Then  $(K_{Y}+\Theta_{Y})\cdot R<0$ for some
 $\Theta$
 where
 $$
 K_X+\Theta\sim_\Q K_X+B+L+\tau E
 $$
  for some sufficiently
 small $\tau>0$ and such that $K_X+\Theta$ and $K_{Y}+\Theta_{Y}$ are both klt.
Thus there is a rational
curve generating $R$ which is not contracted$/A$. This is not possible as $A$ is an abelian variety.

\subsection{{Relative LMMP for certain dlt pairs}}\label{ss-LMMP-dlt}
In addition to the assumptions of \ref{ss-relative-LMMP}, suppose  that $D\ge 0$,  
$\Delta:=B+N$ for some $\Q$-Cartier $\Q$-divisor $N\ge 0$ with  $\Supp N=\Supp D$, and
that $(X,\Delta)$ is lc. We show that we can run an LMMP$/Z$ on $K_X+\Delta+L$ ending up 
with a minimal model.
By assumptions, 
$$
\Supp \rddown{\Delta}\subseteq\Supp N= \Supp D
$$
So, for each sufficiently small rational number $\epsilon>0$, we can write 
\begin{equation*}
\begin{split}
K_X+B'+L &:= K_X+B+N-\epsilon D-\epsilon N+L\\
&\sim_\Q (1-\epsilon)(K_X+\Delta+L)  
\end{split}
\end{equation*}
where $K_X+B'$ is klt. So, by (1) we can run an LMMP$/Z$ on $K_X+B'+L$ which ends up 
with a minimal model$/Z$ of $K_X+B'+L$ hence a minimal model$/Z$ of $K_X+\Delta+L$.

If $Z=A$ is an abelian variety, then by \ref{ss-LMMP-abelian} the minimal model just 
constructed is global not just over $A$.

\vspace{0.5cm}
\section{The nonvanishing}

The main result of this section is the following nonvanishing statement.

\begin{thm}\label{t-nv}
Let $f\colon X\to A$ be a morphism from a normal projective variety $X$ to an abelian variety $A$.
Assume that

$(1)$ $D=K_X+B+L$ is big$/A$, $L$ is globally nef, and

$(2)$ $(X,B)$ is klt.

 Then, there is $P\in \Pic^0_\Q(A)$ such that $\kappa(D+f^*P)\ge 0$.
\end{thm}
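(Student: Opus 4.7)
The plan is to combine the LMMP over abelian varieties from Section~\ref{rem-LMMP-abelian} with a generic-vanishing argument on $A$ in the spirit of Campana-Chen-Peternell [\ref{CCP}, Theorem 3.1]: first reduce to a convenient birational model of $(X,B)$, then show that an appropriate pushforward to $A$ is a generic-vanishing (GV) sheaf, and finally extract a nontrivial section after twisting by some element of $\Pic^0(A)$.

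First I would apply \ref{ss-relative-LMMP} and \ref{ss-LMMP-abelian} to $D=K_X+B+L$ over $A$, producing a birational map $\phi\colon X\bir Y/A$ to a log minimal model on which $D_Y$ is globally nef and semi-ample$/A$. On a common resolution $p\colon W\to X$, $q\colon W\to Y$ one has $p^*D=q^*D_Y+E$ with $E\ge 0$ containing every $\phi^{-1}$-exceptional divisor, and since $f\circ p=f_Y\circ q$ we get $p^*(D+f^*P)=q^*(D_Y+f_Y^*P)+E$ for every $P\in\Pic^0_\Q(A)$; by the standard preservation of sections under LMMP this yields $\kappa(D+f^*P)=\kappa(D_Y+f_Y^*P)$. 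Replacing $X$ by $Y$ and then passing to a log resolution, I may therefore assume $X$ is smooth, $D$ is globally nef and semi-ample$/A$, and still write $D=K_X+B+L$ with $(X,B)$ klt and $L$ nef.

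Next, for $m$ divisible enough that $mD$ is Cartier, I would analyze the pushforward
$$
\mathcal{F}_m:=f_*\mathcal{O}_X(mD).
$$
The identity $mD=K_X+B+(L+(m-1)D)$ writes $mD$ as an adjoint-type divisor in which the extra term $L+(m-1)D$ is globally nef. Bigness of $D$ over $A$ gives $H^0(F,mD|_F)\neq 0$ for the general fibre $F$ and $m$ large enough, so $\mathcal{F}_m\neq 0$. The key technical step is to show that $\mathcal{F}_m$ is a GV-sheaf on $A$, i.e.
$$
\codim_{\Pic^0(A)}\{Q\in\Pic^0(A)\mid H^i(A,\mathcal{F}_m\otimes Q)\neq 0\}\ge i
\qquad\text{for every } i\ge 0.
$$
This would follow by combining relative Koll\'ar vanishing (which yields $R^if_*\mathcal{O}_X(mD)=0$ for $i>0$) with the Fourier-Mukai transform on the abelian variety $A$, in the tradition of the generic vanishing theorems of Green-Lazarsfeld, Hacon and Pareschi-Popa for adjoint-type line bundles.

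Once $\mathcal{F}_m$ is a nonzero GV-sheaf, a standard consequence of the Fourier-Mukai philosophy is that $H^0(A,\mathcal{F}_m\otimes Q)\neq 0$ for some $Q\in\Pic^0(A)$, and by the projection formula together with Leray this equals $H^0(X,\mathcal{O}_X(mD)\otimes f^*Q)$; hence $mD+f^*Q$ is effective, and setting $P:=Q/m\in\Pic^0_\Q(A)$ yields $\kappa(D+f^*P)\ge 0$. The main obstacle is the GV property in the previous step: because $L+(m-1)D$ is only big$/A$ and not big globally, ordinary Kawamata-Viehweg vanishing on $X$ does not produce vanishing against arbitrary twists by $f^*Q$, and the Fourier-Mukai machinery on $A$ is genuinely needed to convert relative vanishing on $X/A$ into the codimension estimate in $\Pic^0(A)$ that characterizes GV.
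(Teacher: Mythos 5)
Your overall architecture coincides with the paper's: run the LMMP of Section \ref{rem-LMMP-abelian} to make $D$ globally nef, push forward a multiple to $A$, show the pushforward is a generic-vanishing sheaf, and use Mukai's equivalence to deduce $V^0(\mathcal{F}_m)\neq\emptyset$. The problem is that the one step you yourself flag as ``the main obstacle'' --- the GV property of $\mathcal{F}_m$ --- is precisely the mathematical content of the theorem, and you do not prove it; appealing to ``the tradition of Green--Lazarsfeld, Hacon and Pareschi--Popa'' does not cover the divisor $mD=K_X+B+(L+(m-1)D)$, whose positive part is only big over $A$ rather than globally, so none of the standard generic vanishing theorems applies off the shelf. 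What is missing is the concrete verification of Hacon's criterion [\ref{Ha04}, Theorem 1.2]: one must show $H^i(A,\mathcal{F}_m\otimes M^\vee)=0$ for $i>0$, where $M$ is the Fourier--Mukai transform of a sufficiently ample line bundle $\hat M$ on $\hat A$. Since $M^\vee$ is a vector bundle of higher rank, no vanishing theorem on $X$ applies to this twist directly. The paper's device is to pass to the \'etale isogeny $\phi\colon\hat A\to A$ defined by $\hat M$, use Mukai's isomorphism $\phi^*(M^\vee)\cong\hat M^{\oplus h^0(\hat M)}$ [\ref{Mukai}, Proposition 3.11] to convert the twist by $M^\vee$ into a twist by an ample line bundle on the cover $\hat X=X\times_A\hat A$, and then apply global Kawamata--Viehweg vanishing there; this in turn requires knowing that the LMMP output behaves well under \'etale base change, so that the pulled-back $D$ is still nef and the pulled-back pair still klt, which is exactly why the paper's Step 1 records the vanishing on every \'etale cover $\tilde A\to A$ in advance. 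Without this (or an equivalent) argument your proof has a hole at its centre.

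Two smaller points. First, you only invoke the relative vanishing $R^if_*\mathcal{O}_X(mD)=0$, but the argument needs the global statement $H^i(X,mD+f^*H+f^*P)=0$ for $H$ ample on $A$ and $P\in\Pic^0(A)$; this holds because $L+(m-1)D+f^*H$ is globally nef and big (big over $A$ plus the pullback of an ample divisor), and the relative and global vanishings together give $H^i(A,\mathcal{F}_m\otimes\mathcal{O}_A(H+P))=0$ via Leray. Second, after ``passing to a log resolution'' you can no longer assume $(X,B)$ is klt with $B\ge 0$, since the crepant pullback of $K_Y+B_Y$ acquires negative coefficients; it is cleaner to stay on the (possibly singular) minimal model, as the paper does, where Kawamata--Viehweg vanishing holds without smoothness.
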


\begin{proof} \emph{Step 1.} By Section 3, there is a sequence
$X=X_1\bir X_2 \bir \cdots X_r=X'/A$ of divisorial contractions and log flips
with respect to $K_X+B+L$ so that $K_{X'}+B_{X'}+L_{X'}$ is nef.
Let $I$ be a positive integer so that $ID_{X'}$ is Cartier, and
put $\mathcal{F}_s:=f'_*\mathcal{O}_{X'}(sID_{X'})$ where $f'$ is the
map $X'\to A$.
For any ample divisor $H$ on $A$ and any $P \in \rm{Pic}^0(A)$,
by the Kawamata-Viehweg vanishing theorem,
$$
R^if_*'\mathcal{O}_{X'}(sID_{X'}+ f^{'*}H + f^{'*}P)=0 ~~ \mbox{for all $i>0$ and $s\ge 2$}
$$
and
$$
H^i(X',sID_{X'}+ f'^*H + f'^*P)=0 ~~ \mbox{for all $i>0$ and $s\ge 2$}
$$
hence
$$
H^i(A,\mathcal{F}_s\otimes \mathcal{O}_{A}(H+P))=0
$$
for all $i >0$ and $s\ge 2$.
In other words, the sheaf $\mathcal{F}_s\otimes \mathcal{O}_{A}(H)$ is $IT^0$ for all ample divisors $H$.

Let $\phi\colon \tilde{A}\to A$ be a projective \'etale map, and define $\tilde{X}_i=\tilde{A}\times_A X_i$.
Then, $K_{\tilde{X}}+\tilde{B}$, the pullback of $K_X+B$,
is also klt, and $\tilde{L}$, the pullback of $L$, is nef.
The birational map $\tilde{X}\bir\tilde{X}'/\tilde{A}$ is decomposed into a sequence 
of divisorial contractions and log flips with respect to $K_{\tilde{X}}+\tilde{B}+\tilde{L}$ and
$K_{\tilde{X}'}+\tilde{B}_{\tilde{X}'}+\tilde{L}_{\tilde{X}'}$ is nef. Therefore, we have vanishing
properties on $\tilde{X}$ and $\tilde{A}$ similar to those above.

From now on, essentially we only need the above vanishing properties so to simplifiy
notation we replace $X$ by $X'$, $K_X+B$ by $K_{X'}+B_{X'}$, and $D$ by $D_{X'}$:
the new $D$ is nef but of course we may loose
the nefness of $L$.\\

\emph{Step 2.}
It turns out that $ \mathcal{F}_s$
satisfies a {\it generic vanishing theorem} for $s\ge 2$. That is
to say that we claim that $ \mathcal{F}_s$ satisfies the following condition
of Hacon [\ref{Ha04}, Theorem 1.2]:

$(*)$ \emph{let $\hat{M}$ be any sufficiently ample line bundle
on the dual abelian variety $\hat{A}$, and let $M$ on $A$ be the Fourier-Mukai transform of
$\hat{M}$,  and ${M}^\vee$ the dual of $M$; then $H^i(A,\mathcal{F}_s\otimes {M}^\vee)=0$
for all $i>0$.}\\

The condition $(*)$ implies the existence of a chain of inclusions
$$
V^0( \mathcal{F}_s) \supset V^1(\mathcal{F}_s)\supset\cdots \supset
V^n(\mathcal{F}_s)
$$
where
$$
V^i(\mathcal{F}_s):=\{ P \in {\rm
Pic}^0(A) \mid H^i(A, \mathcal{F}_s \otimes P) \ne 0\}
$$
and $n=\dim A$ (see the comments following [\ref{Ha04}, Theorem 1.2]).\\

\emph{Step 3.} Assume that $(*)$ holds.  Since
$\mathcal{F}_s$ is a sheaf of rank $h^0(F, sID|_F)$ where $F$ is a general fibre of $f$, it
is a non-zero sheaf for $s \gg 0$ because $D$ is big over $A$. Therefore, 
 $V^0(\mathcal{F}_s) \ne\emptyset$ for $s \gg 0$ otherwise $V^i(\mathcal{F}_s) =\emptyset$ for all
$i$ which implies that the Fourier-Mukai transform of
$\mathcal{F}_s$ is zero (by base change and the definition of
Fourier-Mukai transform). This contradicts  the fact that
the Fourier-Mukai transform
gives an equivalence of the derived categories $D(A)$ and $D(\hat{A})$ as proved by 
Mukai [\ref{Mukai}, Theorem 2.2]. 
Now $V^0(\mathcal{F}_s) \ne \emptyset$ implies that
$$
H^0(X,sID+f^*P)=H^0(A, \mathcal{F}_s\otimes \mathcal{O}_A(P)) \ne 0
$$
for some $P \in \Pic^0(A)$. This implies the theorem.\\

\emph{Step 4.} It remains to prove the claim $(*)$ above.
Let  $\phi: \hat{A} \to A$ be the isogeny defined
by $\hat{M}$ which is \'etale since we work over $\C$. 
 By [\ref{Mukai}, Proposition 3.11],
$$
\phi^*( {M}^\vee) \cong \oplus^{h^0(\hat{M})} \hat{M}
$$

Let $\hat{f}: \hat{X}:=X \times_A \hat{A} \to \hat{A}$ be the map obtained by
base change, and $\varphi: \hat{X} \to X$ the induced map.  Let
$\mathcal{G}_s:=\hat{f}_*\mathcal{O}_{\hat{X}}(\varphi^*sID)$. 
Now 
$$
H^i(A,\mathcal{F}_s\otimes {M}^\vee)=0 ~~\mbox{for all $i>0$}
$$
if and only if 
$$
H^i(X,\mathcal{O}_X(sID)\otimes f^*{M}^\vee)=0 ~~\mbox{ for all $i>0$} 
$$
and this in turn holds if 
$$
H^i(\hat{X},\mathcal{O}_{\hat{X}}(\varphi^* sID)\otimes \varphi^*f^*{M}^\vee)=0~~\mbox{for all $i>0$} 
$$
because $\mathcal{O}_X$ is a direct summand of $\varphi_*\mathcal{O}_{\hat{X}}$ and the 
sheaves involved are locally free. The latter vanishing is equivalent to 
$$
H^i(\hat{A},\mathcal{G}_s\otimes \phi^*{M}^\vee)
=\oplus^{h^0(\hat{M})} H^i(\hat{A},\mathcal{G}_s\otimes \hat{M})=0~~\mbox{for all $i>0$} 
$$
which holds by Step 1.
\end{proof}

\begin{rem}\label{rem-v0}
We will abuse notation and use the same notation for a Cartier divisor and its 
associated line bundle.
Going a little bit further in the above proof, one can have a
stronger statement, that is, if $V^0(\mathcal{F}_s)$ has positive dimension, 
then we can choose $P$ so that $\kappa(D+f^*P)>0$. Indeed fix an $s \ge 2$ and let
$W_s$ be an irreducible component of $V^0(\mathcal{F}_s)$. Clearly
there is a natural multiplication map  $\pi_s\colon W_s \times W_s \to
V^0(\mathcal{F}_{2s})$ which sends $(Q,Q')$ to $Q\otimes Q'$. 
Let $W_{2s}$ be the irreducible component
containing the image. Then, $\dim W_s\le \dim W_{2s}$ since for each 
$Q\in W_s$, we have $\pi_s(Q\times W_s)=Q\otimes W_s\subseteq W_{2s}$.
Thus, $\dim W_{ks}$ is independent of $k$ for $k\gg 0$. For such $k$ 
we have: if $Q,Q'\in W_{ks}$, then $Q\otimes W_{ks}=Q'\otimes W_{ks}$ 
which implies that 
$$
G=W_{ks}\otimes W_{ks}^{-1}:=\{Q\otimes Q'^{-1} \mid Q,Q'\in W_{ks}\}
$$
is an abelian subgroup of $\hat{A}$, $W_{ks}\otimes G=W_{ks}$ and that 
$W_{ks}=Q\otimes G$ for any $Q\in W_{ks}$. Therefore, $W_{ks}$ is a
translation of an abelian subvariety of positive dimension. It
follows that for certain $Q\in W_{ks}$ and $Q'\in G$ the two arrows 
$$
\xymatrix{
 & H^0(\mathcal{F}_{ks}\otimes Q)\otimes H^0(\mathcal{F}_{ks}\otimes Q) \ar[d] \\
H^0(\mathcal{F}_{ks}\otimes Q\otimes Q')\otimes H^0(\mathcal{F}_{ks}\otimes Q\otimes Q'^{-1}) \ar[r] 
  &  H^0(\mathcal{F}_{2ks}\otimes Q^2) 
}
$$
produce two sections of $\mathcal{F}_{2ks}\otimes Q^2$ hence
$\kappa(D+f^*P)>0$ for $P=\frac{1}{ksI}Q$.
\end{rem}

\vspace{0.5cm}
\section{Proof of the main theorem}

\begin{rem}\label{rem-Ueno-fib}
Let $Z$ be a normal projective variety and let $\pi\colon Z\to A$ be 
a generically finite morphism to an abelian variety $A$. Let $Z\to Z'\to A$ be the 
Stein factorization of $\pi$. 
Then by Kawamata [\ref{Kawamata-abelian}, Theorem 13] (also see Ueno [\ref{Ueno}, Theorem 3.10]), 
there exist an abelian subvariety $A_1\subseteq A$,  \'etale covers $\tilde{Z'}\to Z'$ and $\tilde{A_1}\to A_1$, 
and a normal projective variety ${Z_2}'$ such that 

$\bullet$  we have a finite morphism ${Z_2}'\to A_2:=A/A_1$,

$\bullet$ $\tilde{Z'}\simeq \tilde{A_1}\times Z_2'$,

$\bullet$ $\kappa(Z)=\kappa(Z')=\kappa(Z_2')=\dim Z_2'$.\\
\end{rem}

\begin{proof}(of Theorem \ref{t-mmodel})
\emph{Step 1.}
We use induction on dimension of $X$. By assumptions, there is a generically finite morphism 
$Z\to A$ to an abelian variety.  Since $K_X+B$ is big$/Z$, it is also big$/A$.
So, by Theorem \ref{t-nv}, $K_X+B\equiv D$ for some $\Q$-divisor $D\ge 0$. By 
Campana-Koziarz-P\v{a}un [\ref{CKP}] or Kawamata [\ref{Kawamata-v=0}],  
we may assume that $K_X+B\sim_\Q D$. After taking appropriate resolutions, we may 
in addition assume that $Z$ is smooth, 
 $X$ is smooth, and that $\Supp (B+D)$ has simple 
normal crossing singularities.\\ 

\emph{Step 2.}
Let $N\ge 0$ be the $\Q$-divisor such that $\Supp N=\Supp D$, $(X,B+N)$ is lc, and 
$\Supp \rddown{B+N}=\Supp D$. Let $\Delta:=B+N$. 
By Section \ref{rem-LMMP-abelian}, we can run an LMMP on $K_X+\Delta$ which ends up with a 
log minimal model $(Y,\Delta_Y)$. Moreover, $K_Y+\Delta_Y$ is semi-ample$/A$. 
Let $Y\to Y'/A$ be the contraction defined by $K_Y+\Delta_Y$. 

Now assume that every component of $\rddown{\Delta_Y}$ is contracted$/Y'$ 
which is equivalent to saying that every component of $D_Y$ is contracted$/Y'$ 
because $\Supp \rddown{\Delta_Y}=\Supp D_Y$.   
In this case
$$
K_{Y'}+\Delta_{Y'}\sim_\Q D_{Y'}+N_{Y'}=0
$$
which implies that $K_Y+\Delta_Y\sim_\Q 0$ because $K_Y+\Delta_Y$ is the pullback of 
$K_{Y'}+\Delta_{Y'}$. Therefore, 
$$
\kappa(D_Y+N_Y)=\kappa_\sigma(D_Y+N_Y)=0
$$ 
where $\kappa_\sigma$ 
denotes the numerical Kodaira dimension defined by Nakayama. This in turn implies that 
$$
\kappa(D+N)=\kappa_\sigma(D+N)=0
$$ 
which gives $\kappa(D)=\kappa_\sigma(D)=0$ because $\Supp N=\Supp D$. 
Therefore, $(X,B)$ has a good log minimal model (cf. [\ref{Gongyo}]). 
Moreover, since $D$ is big$/A$ and since 
$D$ is contracted$/Y'$, $X\to Z$ is generically finite, that is,  $\dim F=0$. 
Thus, 
$$
\kappa(K_X+B)\ge \kappa(X)\ge \kappa(Z)=\kappa(K_F+B_F)+\kappa(Z)
$$ 
So, from now on we may assume that 
there is a component 
$S$ of $\rddown{\Delta}$ such that $S_Y$ is not contracted$/Y'$.\\ 

\emph{Step 3.}
Choose a small rational number $\epsilon>0$ and put 
$$
K_{X}+\Gamma:=K_{X}+\Delta-\epsilon(\rddown{\Delta}-S)
$$
Then $(X,\Gamma)$ and $(Y,\Gamma_Y)$ are both plt with $\rddown{\Gamma}=S$ 
and $\rddown{\Gamma_Y}=S_Y$.
Let $Y''$ be a log minimal model of $K_{Y}+\Gamma_Y$ over $Y'$. Such a model exists 
because 
$K_{X}+\Gamma\sim_\Q M$ for some $M\ge 0$ with $S\subset \Supp M\subset \Supp \Gamma$ 
which allows us to reduce the problem to the klt case similar to Section \ref{rem-LMMP-abelian}. 
In particular, this also shows that   
$K_{Y''}+\Gamma_{Y''}$ is semi-ample$/Y'$ by the base point free theorem. 
Therefore, since $K_{Y'}+\Delta_{Y'}$ is ample$/A$,
by choosing $\epsilon$ to be small enough, we can assume that $K_{Y''}+\Gamma_{Y''}$ is 
semi-ample$/A$.\\ 

\emph{Step 4.}
By the arguments of 
Section \ref{rem-LMMP-abelian}, $K_{Y''}+\Gamma_{Y''}$ is nef globally. This implies that 
$N_\sigma(K_{Y''}+\Gamma_{Y''})=0$ which in particular means that $S_{Y''}$ is not 
a component of $N_\sigma(K_{Y''}+\Gamma_{Y''})$. Moreover, by construction, 
$(K_{Y'}+\Delta_{Y'})|_{S_{Y'}}$ is ample$/A$ hence $(K_{Y''}+\Delta_{Y''})|_{S_{Y''}}$ is big$/A$ 
which implies that  
$(K_{Y''}+\Gamma_{Y''})|_{S_{Y''}}$ is big$/A$ as $\epsilon$ is sufficiently small. 
Also, since $K_{Y''}+\Gamma_{Y''}$ is a plt log divisor, $(K_{Y''}+\Gamma_{Y''})|_{S_{Y''}}$ is a klt log divisor.
Thus by induction 
$\kappa(K_{Y''}+\Gamma_{Y''})|_{S_{Y''}})\ge 0$. On the other hand, 
$K_{Y''}+\Gamma_{Y''}\sim_\Q M_{Y''}\ge 0$ with $S_{Y''}\subset \Supp M_{Y''}\subset \Supp \Gamma_{Y''}$. 
Therefore, by the extension result [\ref{DHP}, Theorem 1.8], 
$$
H^0(Y'',m(K_{Y''}+\Gamma_{Y''}))\to H^0(S_{Y''},m(K_{Y''}+\Gamma_{Y''})|_{S_{Y''}})
$$
is surjective for any sufficiently divisible $m>0$. This means that 
$\kappa(K_{Y''}+\Gamma_{Y''})\ge 1$ hence 
$\kappa(K_{Y''}+\Delta_{Y''})\ge 1$ so $\kappa(K_{Y'}+\Delta_{Y'})\ge 1$ which in turn implies that
 $\kappa(K_{X}+\Delta)\ge 1$. Thus  $\kappa(K_{X}+B)\ge 1$ since $\Delta=B+N$ and 
 $N$ is supported in $D$.\\

\emph{Step 5.}
Since  $\kappa(K_{X}+B)\ge 1$, $K_X+B$ has a non-trivial Iitaka fibration $g\colon X\bir V$ 
which we may assume to be a morphism.
Let $G$ be a general fibre of $g$. Then by the definition of Iitaka fibration, 
$\kappa(K_G+B_G)=0$ where $K_G+B_G=(K_X+B)|_G$. 
Since $K_X+B$ is big$/A$, we can write 
$$
K_X+B\sim_\Q H+E/A
$$ 
where $H$ is ample and $E\ge 0$. Thus 
$$
K_G+B_G\sim_\Q H|_G+E|_G/A
$$ 
which shows that $K_G+B_G$ is also big$/A$. So $K_G+B_G$ is big$/P$ where $P$ is the 
normalization of the image of $G$ in $A$.
Now since $(G,B_G)$ is klt, $K_G+B_G$ is big$/P$, and $P$ has maximal albanese 
dimension, by induction, $(G,B_G)$ has a 
good log minimal model. Moreover, again by induction, 
$$
0=\kappa(K_G+B_G)\ge \dim G-\dim P+\kappa(P)
$$
which implies that $\dim G-\dim P=0$ hence $G\to A$ is  
 generically finite. Therefore, the general fibres $G$ of $g$ intersect the general 
 fibres $F$ of $f$ at at most  
finitely many points. Thus $F\to V$ is also generically finite which implies that 
$$
\kappa(K_X+B)=\dim V\ge \dim F
$$\

\emph{Step 6.}
Let $Z', \tilde{Z'}, Z_2', \tilde{A_1}$, etc, be as in Remark \ref{rem-Ueno-fib} 
which are constructed for the morphism $Z\to A$. 
Let $\tilde{Z}={Z}\times_{Z'}\tilde{Z'}$ 
and  $\tilde{X}={X}\times_{Z'}\tilde{Z'}$. Put $Z_1':=\tilde{A_1}$. 
The induced morphisms  $\tilde{Z'}\to Z'$, $\tilde{Z}\to Z$, and $\tilde{X}\to X$ are all \'etale.
By replacing $Z'$ with  $\tilde{Z'}$, $Z$ with  $\tilde{Z}$, and  $X$ with  $\tilde{X}$, we can 
assume that $Z'=Z_2'\times Z_1'$ where $Z_1'$ admits an \'etale finite morphism onto an 
abelian subvariety $A_1\subseteq A$ and $Z_2'$ admits a finite morphism to $A_2=A/A_1$. 

On the other hand, 
as pointed out above, $(G,B_G)$ has a good log minimal model, say $({G}',B_{{G}'})$. 
Since $Z_2$ admits a finite morphism into an abelian variety, it does not contain any rational 
curve hence in particular the induced map ${G}'\bir Z_2$ is a morphism. 
Moreover, since $\kappa(K_G+B_G)=0$, 
$K_{{G'}}+B_{{G}'}\sim_\Q 0$. 
Applying the canonical bundle formula [\ref{Ambro-adjunction}], we deduce that $\kappa(Q)=0$ where 
the contraction ${G}'\to Q$ is given by the Stein factorization of ${G}'\to Z_2$. Since $Z_2$ is of general type, 
$Q$ is a point otherwise $Z_2$ would be covered by a family of subvarieties of Kodaira dimension zero which is 
not possible. Therefore, the general fibres $G$ of $g$ map into the fibres of $Z\to Z_2$. 

Since $G\to A$ is generically finite, $G\to Z$ is also generically finite hence  
$\dim G$ is at most the dimension of a general fibre of $Z\to Z_2$. Therefore,
$$
\dim Z-\kappa(Z)=\dim Z-\dim Z_2\ge \dim G 
$$
which implies that 
$$
\dim Z-\dim G\ge \kappa(Z)
$$
hence 
$$
\kappa(K_X+B)=\dim V=\dim X-\dim G
$$
$$
= \dim Z+\dim F-\dim G \ge \dim F+\kappa(Z)
$$\

\emph{Step 7.}
Finally, we will construct a good log minimal model for $(X,B)$. 
Run an LMMP$/V$ on $K_X+B$ with scaling of some ample divisor. 
Since $\kappa(K_G+B_G)=0$ and since $(G,B_G)$ has a good log minimal model, the LMMP terminates 
near $G$ hence we arrive 
at a model ${X'}$ such that $(K_{{X'}}+B_{{X'}})|_{{G'}}\sim_\Q 0$ 
 where $G'$ (by abuse of notation) denotes the birational transform of $G$ (cf. [\ref{B-lc-flips}, Theorem 1.9]). 
But then continuing the LMMP we end up with a good log minimal model of $(X,B)$ over $V$ 
(cf. [\ref{B-lc-flips}, Theorem 1.5]). Denoting the minimal model again by ${X'}$, 
the semi-ampleness of $K_{{X'}}+B_{{X'}}$ over $V$ gives a contraction ${X'}\to {T'}/V$ 
such that ${T'}\to V$ is birational and that 
$K_{{X'}}+B_{{X'}}\sim_\Q 0/{T'}$. Therefore  $(X,B)$ has a 
good log minimal model (cf. [\ref{BH}, Proposition 3.3]).\\
\end{proof}

\section{Relation with the log Iitaka conjecture over abelian varieties}

Let $(X,B)$ and $X\to Z$ be as in Conjecture \ref{conj-iitaka-pairs} and assume that 
$Z$ has maximal albanese dimension. So, there is a generically finite map $Z\to A$ 
into an abelian variety.

Let us explain how one might use the method of the proof of Theorem \ref{t-mmodel} to 
show that 
$$
\kappa(K_X+B)\ge \kappa(K_F+B_F)+\kappa(Z)
$$
If one can show that $\kappa(K_X+B)\ge 1$, then the rest of the proof would consist of 
some relatively easy inductive arguments. 

 Assume that $\kappa(K_X+B)\le 0$.
First, we borrow the main idea
of [\ref{Birkar}] to consider the relative Iitaka fibration of $K_X+B$ over $Z$ 
which was used 
to prove the Iitaka conjecture in dimension six. Next, we apply 
Fujino-Mori [\ref{Fujino-Mori}] to get a klt pair $(X',B')$, a morphism
$X'\to Z$, and a nef $\Q$-divisor $L'$
such that $K_{X'}+B'+L'$ is big$/Z$, 
$$
\kappa(K_X+B)=\kappa(K_{X'}+B'+L')
$$
and $\kappa(K_F+B_F)$ is equal to the dimension of a general fibre of $X'\to A$.
If $\kappa(K_F+B_F)=0$, then $X'\to A$ is generically finite and the proof 
in this case is relatively easy. So, we can assume $\kappa(K_F+B_F)\ge 1$.

By Theorem \ref{t-nv} and results of 
Campana-Koziarz-P\v{a}un [\ref{CKP}, Theorem 1] and Kawamata [\ref{Kawamata-v=0}]
we get a nonvanishing 
$$
K_{X'}+B'+L'\sim_\Q D'\ge 0
$$
Just as in the proof of Theorem \ref{t-mmodel}, we can construct a plt pair $(X',\Gamma')$ so that
$$
\kappa(K_{X'}+B'+L')=\kappa(K_{X'}+\Gamma'+L')
$$
$S':=\rddown{\Gamma'}\neq 0$, $S'$ is irreducible, $S'\subseteq \Supp D'\subseteq \Supp \Gamma'$, and 
$(K_{X'}+\Gamma'+L')|_{S'}$ is big$/Z$.

Now, we apply the nonvanishing again to get nonzero sections in
$$
H^0(S',m(K_{X'}+\Gamma'+L')|_{S'})
$$
for some sufficiently divisible $m>0$. If we could extend a section, then we would be done. 
In general, one cannot extend sections if $L'$ is an arbitrary nef divisor. 
But in our situation $L'$ is expected to have strong positivity properties such as 
being semi-positive in the analytic sense. If one can show that such a semi-positivity 
property holds, then it seems that some section can be extended from $S'$. At the moment 
we are not able to show that $L'$ is semi-positive.



\vspace{2cm}

\flushleft{DPMMS}, Centre for Mathematical Sciences,\\
Cambridge University,\\
Wilberforce Road,\\
Cambridge, CB3 0WB,\\
UK\\
email: c.birkar@dpmms.cam.ac.uk
\vspace{0.3cm}

National Center for Theoretical Sciences, Taipei Office, and\\
Department of Mathematics,\\
National Taiwan University,\\
Taipei 106, Taiwan\\
email: jkchen@math.ntu.edu.tw

\vspace{0.5cm}

\end{document}